\newcommand{\bbn}{\mathbb{N}}
\newcommand{\bbz}{\mathbb{Z}}
\newcommand{\abs}[1]{\left\lvert #1\right\rvert}
\newcommand{\brac}[1]{\left( #1\right)}
\newtheorem{theorem}{Theorem}
\newtheorem{lemma}{Lemma}
\newtheorem{corollary}{Corollary}
\newtheorem{proposition}{Proposition}
\newtheorem{question}{Question}
\begin{document}

\title{Unit fractions with shifted prime denominators}
\author{Thomas F. Bloom}
\email{bloom@maths.ox.ac.uk}
\address{Mathematical Institute\\Woodstock Road\\Oxford\\OX2 6GG, United Kingdom}

\begin{abstract}
We prove that any positive rational number is the sum of distinct unit fractions with denominators in $\{p-1 : p\textrm{ prime}\}$. The same conclusion holds for the set $\{p-h : p\textrm{ prime}\}$ for any $h\in\mathbb{Z}\backslash\{0\}$, provided a necessary congruence condition is satisfied. We also prove that this is true for any subset of the primes of relative positive density, provided a necessary congruence condition is satisfied.
\end{abstract}

\maketitle

The study of decompositions of rational numbers into sums of distinct unit fractions (often called `Egyptian fractions') is one of the oldest topics in number theory (see \cite{BlEl} for further background and many related problems on Egyptian fractions). It is elementary to prove that such a decomposition is always possible, for instance by using a greedy algorithm. In this paper we explore a natural variant that imposes restrictions on the denominators in these decompositions.
\begin{question}\label{ques}
For which $A\subseteq \bbn$ is it true that every positive rational number can be written as $\sum_{n\in B}\frac{1}{n}$ for some finite $B\subset A$?
\end{question}
A trivial necessary condition is that the set contains multiples of every prime; for example, the set of all odd numbers does not have this property (it cannot represent $\frac{1}{2}$). The condition $\sum_{n\in A}\frac{1}{n}=\infty$ is also clearly necessary, but not sufficient -- indeed, it is easy to see that there is no solution to $1=\sum_{p\in A}\frac{1}{p}$ where $A$ is any finite set of primes, even though $\sum_{p\leq N}\frac{1}{p}\sim \log\log N$. 

For sets $A$ with no such trivial obstructions it is reasonable to speculate that an Egyptian fraction decomposition with denominators restricted to $A$ always exists. An early seminal paper on this topic is by Graham \cite{Gr}, who proved a general result that implies, for example, that such a decomposition always exists when $A$ is the set of all primes \emph{and} squares. Motivated by a conjecture of Sun \cite[Conjecture 4.1]{Su17}, Eppstein \cite{Epp} developed an alternative elementary method, which implies such a decomposition always exists when $A$ is the set of `practical numbers' (those $n$ such that all $m\leq n$ can be written as the sum of distinct divisors of $n$).

A variant of Question~\ref{ques} can be asked even when there are trivial obstructions. For example, Graham \cite{Gr} has shown that every rational number $x$ can be written as the sum of distinct unit fractions with square denominators, subject to the obvious necessary condition that $x\in [0,\frac{\pi^2}{6}-1)\cup [1,\frac{\pi^2}{6})$, and every rational number $x$ with square-free denominator can be written as the sum of distinct unit fractions with square-free denominators.

A natural candidate of number theoretic interest, for which there exist no obvious obstructions to any rational decomposition, and for which the methods of \cite{Gr} and \cite{Epp} are not applicable, is the set of shifted primes $\{p-1:p\textrm{ prime}\}$. That such a restricted Egyptian fraction decomposition always exists was conjectured by Sun \cite[Conjecture 4.1]{Su17} (see also \cite[Conjecture 8.17]{Su21} and \cite{Su15} for some numerical data). In this paper we use the method of \cite{Bl} to prove this conjecture: any positive rational $x>0$ has a solution (indeed, infinitely many) to
\[x=\frac{1}{p_1-1}+\cdots+\frac{1}{p_k-1}\]
where $p_1<\cdots<p_k$ are distinct primes. We also prove a similar result with denominators $p_i-h$ for any (fixed) $h\neq 0$, although for $\abs{h}>1$ there are some trivial congruence obstructions -- for example, since no subset of $\{p+2 : p\textrm{ prime}\}$ has lowest common multiple divisible by $8$ the fraction $\frac{1}{8}$ cannot be represented as the sum of distinct unit fractions of the shape $\frac{1}{p+2}$. 

We deduce this existence result from the following more general result, showing that any shifted set of primes, all divisible by $q$, of `positive upper relative logarithmic density' contains a decomposition of $\frac{1}{q}$. (Recall that $\sum_{p\leq N}\frac{1}{p}\sim \log\log N$, and so it is natural to consider $\sum_{\substack{p\leq N\\ p\in A}}\frac{1}{p}$ divided by $\log\log N$ as a measure of the size of $A$.)
\begin{theorem}\label{th-main}
Let $h\in\bbz\backslash\{0\}$ and $q\geq 1$ be such that $(\abs{h},q)=1$. If $A$ is a set of primes congruent to $h\pmod{q}$ such that
\[\limsup_{N\to\infty} \frac{\sum_{p\in A\cap[1,N]}\frac{1}{p}}{\log\log N}>0\]
then there exists a finite $S\subset A$ such that
\[\frac{1}{q}=\sum_{p\in S}\frac{1}{p-h}.\]
\end{theorem}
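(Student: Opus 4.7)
The first step is to reduce the theorem to a standard Egyptian-fraction question on $\bbn$. Since each $p\in A$ satisfies $p\equiv h\pmod q$, one may write $p=qm_p+h$ with $m_p\in\bbn$ (for $p$ larger than $\abs{h}$, which excludes at most finitely many elements of $A$). The target identity $\tfrac{1}{q}=\sum_{p\in S}\tfrac{1}{p-h}$ then becomes $1=\sum_{p\in S}\tfrac{1}{m_p}$, so it suffices to find a finite subset of $B:=\{m_p:p\in A\}\subset\bbn$ whose unit fractions sum to $1$. The density hypothesis transfers, using $\tfrac{1}{p}=\tfrac{1}{qm_p+h}=\tfrac{1}{q}\cdot\tfrac{1}{m_p}+O(1/m_p^2)$, into
\[\limsup_{M\to\infty}\frac{1}{\log\log M}\sum_{m\in B\cap[1,M]}\frac{1}{m}\;>\;0.\]

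The plan from here is to adapt the method of \cite{Bl}, which proves the same Egyptian-fraction conclusion under the much stronger hypothesis of positive upper \emph{logarithmic} density (that is, $\log M$ in the denominator, not $\log\log M$). The skeleton I would pursue is: (i) fix a highly composite auxiliary modulus $Q$ and use Bombieri--Vinogradov, or a sieve substitute, to show that for many divisors $d$ of $Q$ the subset $A_d:=\{p\in A:d\mid m_p\}$ inherits positive relative logarithmic density; (ii) dyadically decompose $B$ and, at each scale, use Fourier analysis together with a structural decomposition to show that subset sums of $\{\tfrac{1}{m}:m\in B\cap[M,2M]\}$ fill out a dense subset of an arithmetic progression with common difference dictated by $Q$; (iii) combine contributions across scales to show that the set of achievable subset sums is dense in a neighbourhood of $0$, and in particular hits $1$ exactly. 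Primality should enter via the fact that a positive density of $p\in A$ have $p-h$ with many small prime factors, forcing $m_p$ to be highly composite and hence rich in useful divisor structure.

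The main obstacle is step (ii). In \cite{Bl}, positive upper logarithmic density guarantees a constant amount of logarithmic mass per dyadic scale on average, which is plenty of room to run a Fourier/combinatorial argument within each block. For primes, however, $\sum_{p\in[2^k,2^{k+1}]}\tfrac{1}{p}$ is only of order $1/k$, so the per-scale mass vanishes as $k\to\infty$ even though the total mass is $\log\log N$. The structural argument of \cite{Bl} must therefore be sharpened so that it produces useful subset-sum structure from the meager $O(1/k)$ mass at scale $k$, or alternatively reorganised so as to pool information across many dyadic scales simultaneously and thereby exploit the full $\log\log N$ budget collectively. Getting the epsilons to balance in this sparser regime is where I expect almost all of the work to lie.
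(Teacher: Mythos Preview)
Your reduction to $B=\{(p-h)/q:p\in A\}$ and the transfer of the $\log\log N$ density hypothesis are exactly right and match the paper. The gap is in your diagnosis of the main difficulty.

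You treat the headline theorem of \cite{Bl} (which concerns sets of positive upper logarithmic density in $\bbn$) as the input, and conclude that the chief obstacle is squeezing the argument down from $\log N$ mass to $\log\log N$ mass. But the \emph{technical} engine of \cite{Bl} (its Proposition~1, restated here as Proposition~\ref{th-techmain}) already works with only $O(1)$ reciprocal mass inside a single range $[N^{1-c},N]$: the hypothesis there is merely $\sum_{n\in A}1/n\geq 2/y+(\log N)^{-1/200}$. Passing from total mass $\gg\log\log N$ on $[2,N]$ to mass $\gg 1$ on some $[M^{1-c},M]$ is then a one-line pigeonhole over the $O_\epsilon(\log\log N)$ ranges $(N^{(1-\epsilon/4)^{i+1}},N^{(1-\epsilon/4)^i}]$; no sharpening of the Fourier or structural steps of \cite{Bl} is needed at all. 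Your ``main obstacle'' dissolves.

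What the paper actually spends its effort on is verifying the \emph{other} hypotheses of that proposition for the set $B$: that almost all $n=(p-h)/q$ are $n^{1-\epsilon}$-friable, have $\omega(n)\sim\log\log n$, and possess small prime divisors in two prescribed ranges $[y,w)$ and $[4w,z)$. These are classical facts about shifted primes (Erd\H{o}s's friability estimate, Halberstam's variance bound for $\omega(p-h)$, and a crude Eratosthenes--Legendre sieve with Siegel--Walfisz input), and showing they remove only $o(\log\log N)$ reciprocal mass is the substance of the proof. Once those conditions hold, repeated application of the proposition yields many disjoint $S_i\subset B$ with $\sum_{n\in S_i}1/n=1/d_i$ for bounded $d_i$, and a final pigeonhole on the $d_i$ assembles $d$ of them into a sum equal to~$1$. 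Your sketch in (i)--(iii), by contrast, proposes rebuilding the Fourier machinery of \cite{Bl} from scratch in a sparser regime, which is both unnecessary and would indeed be hard.
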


A simple application of partial summation produces the following version with (relative) upper logarithmic density replaced by (relative) lower density.

\begin{corollary}\label{cor-one}
Let $h\in\bbz\backslash\{0\}$ and $q\geq 1$ be such that $(\abs{h},q)=1$. If $A$ is a set of primes congruent to $h\pmod{q}$ with positive relative lower density, that is,
\[\liminf_{N\to\infty}\frac{\abs{A\cap [1,N]}}{N/\log N}>0,\]
then there exists a finite $S\subset A$ such that
\[\frac{1}{q}=\sum_{p\in S}\frac{1}{p-h}.\]
\end{corollary}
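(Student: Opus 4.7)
The plan is to deduce Corollary~\ref{cor-one} directly from Theorem~\ref{th-main} by checking that the stated lower density hypothesis implies positive upper relative logarithmic density. In fact one gets for free the stronger statement that the liminf (rather than the limsup) of the normalised logarithmic sum is positive, which is what a routine partial summation argument naturally yields.

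Fix $c>0$ and $T_0\geq 2$ such that $\abs{A\cap[1,t]}\geq c\,t/\log t$ for all $t\geq T_0$, as permitted by the hypothesis. Writing $A(t)=\abs{A\cap[1,t]}$, Abel summation gives
\[\sum_{p\in A\cap[1,N]}\frac{1}{p}=\frac{A(N)}{N}+\int_{T_0}^N\frac{A(t)}{t^2}\,dt+O(1),\]
where the $O(1)$ absorbs the contribution of the integral over $[1,T_0]$ (bounded since $A(t)\leq t$) and of finitely many small primes. Substituting the lower bound $A(t)\geq c\,t/\log t$ and using the standard identity $\int_{T_0}^N\frac{dt}{t\log t}=\log\log N-\log\log T_0$ yields
\[\sum_{p\in A\cap[1,N]}\frac{1}{p}\geq c\log\log N-O(1).\]
Dividing through by $\log\log N$ and letting $N\to\infty$ shows that the normalised logarithmic sum has liminf at least $c>0$, so a fortiori the limsup is positive and the hypothesis of Theorem~\ref{th-main} is satisfied.

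There is no substantive obstacle here; the corollary is essentially the observation that a natural density hypothesis on a set of primes translates into a logarithmic density hypothesis via standard Abelian summation, all of the genuine content residing in Theorem~\ref{th-main}. The only care required is the harmless bookkeeping around the lower endpoint $T_0$, since the assumed lower bound only holds eventually.
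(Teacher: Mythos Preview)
Your proof is correct and follows exactly the approach indicated in the paper, which simply remarks that ``a simple application of partial summation'' reduces Corollary~\ref{cor-one} to Theorem~\ref{th-main}. You have supplied the routine details of that partial summation cleanly and accurately.
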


We remark that (unlike the statement for unrestricted sets of integers, see \cite[Theorem 2]{Bl}) the stronger version of Corollary~\ref{cor-one} with the $\liminf_{N\to\infty}$ replaced by $\limsup_{N\to\infty}$ is false - for example, if $A[N]$ is the set of primes in $[N/2,N]$ then $\sum_{p\in A[N]}\frac{1}{p}\ll \frac{1}{\log N}$, and hence if $A=\cup_k A[N_k]$ where $N_k=2^{k^C}$ for some large absolute constant $C>0$ then $\sum_{n\in A}\frac{1}{n}<1$, and hence certainly we cannot find a finite $S\subset A$ such that $\sum_{n\in S}\frac{1}{n}=1$, and yet
\[\limsup_{N\to\infty}\frac{\abs{A\cap [1,N]}}{N/\log N}\geq 1/2.\]

We now show how Theorem~\ref{th-main} implies the headline result: any positive rational number (subject to the necessary congruence conditions) can be written as the sum of distinct unit fractions with shifted prime denominators.
\begin{corollary}\label{cor-two}
Let $h\in \mathbb{Z}$ and $x=r/q\in \mathbb{Q}_{>0}$ be such that $(\abs{h},q)=1$. There are distinct primes $p_1,\ldots,p_k$ such that
\[x=\frac{1}{p_1-h}+\cdots+\frac{1}{p_k-h}.\]
\end{corollary}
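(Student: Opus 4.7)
The plan is to iterate Theorem~\ref{th-main}. Write $x=r/q$ with $r\in\bbn$; I will construct $r$ pairwise disjoint finite sets $S_1,\ldots,S_r$ of primes congruent to $h\pmod q$, each satisfying $\sum_{p\in S_j}\frac{1}{p-h}=\frac{1}{q}$. Then $S=S_1\cup\cdots\cup S_r$ consists of distinct primes (by disjointness) and gives $\sum_{p\in S}\frac{1}{p-h}=r\cdot\frac{1}{q}=x$, as required. I work under the assumption $h\neq 0$ so that Theorem~\ref{th-main} directly applies; the case $h=0$ forces $q=1$ via the coprimality hypothesis and would need to be handled separately (or is implicitly excluded in Corollary~\ref{cor-two}, whose motivation is genuinely shifted primes).

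First I verify that the set $A_0=\{p\text{ prime}:p\equiv h\pmod q\}$ itself satisfies the hypothesis of Theorem~\ref{th-main}. Since $(\abs{h},q)=1$, the Mertens-type form of the prime number theorem in arithmetic progressions gives $\sum_{p\in A_0\cap[1,N]}\frac{1}{p}\sim\frac{1}{\varphi(q)}\log\log N$, so the relevant limsup equals $1/\varphi(q)>0$.

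Now I build the $S_j$ inductively. Given pairwise disjoint $S_1,\ldots,S_{j-1}\subset A_0$ already chosen, set $A_j=A_0\setminus(S_1\cup\cdots\cup S_{j-1})$. Only finitely many primes are discarded, so $\sum_{p\in A_j\cap[1,N]}\frac{1}{p}$ differs from $\sum_{p\in A_0\cap[1,N]}\frac{1}{p}$ by $O(1)$, and therefore $A_j$ still consists of primes congruent to $h\pmod q$ whose associated limsup is $1/\varphi(q)>0$. Applying Theorem~\ref{th-main} to $A_j$ yields a finite $S_j\subset A_j$ with $\sum_{p\in S_j}\frac{1}{p-h}=\frac{1}{q}$, automatically disjoint from $S_1\cup\cdots\cup S_{j-1}$. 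After $r$ steps I obtain the desired decomposition. The argument has no real obstacle beyond invoking Theorem~\ref{th-main}: the entire content is packed into that theorem, and the only routine point is that removing a finite set preserves the logarithmic density hypothesis, which is immediate since a finite tail contributes $O(1)$ while $\log\log N$ diverges.
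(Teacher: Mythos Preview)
Your proof is correct and follows essentially the same approach as the paper: apply Dirichlet/Mertens to verify the density hypothesis for the full progression, then iterate Theorem~\ref{th-main} $r$ times, removing the finite set found at each step. Your remark about the case $h=0$ is well taken---the paper's own proof silently assumes $h\neq 0$ (as required by Theorem~\ref{th-main}), and indeed the abstract restricts to $h\in\bbz\backslash\{0\}$, so the corollary should be read with that restriction.
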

\begin{proof}
By Dirichlet's theorem (see for example \cite[Corollary 4.12]{MV}) if $A$ is the set of primes congruent to $h\pmod{q}$, then
\[\sum_{n\in A\cap[1,N]}\frac{1}{n}\geq (\tfrac{1}{\phi(q)}+o(1))\log\log N.\]
Trivially the same must hold for $A\backslash B$, for any finite set $B$. In particular by $r$ repeated applications of Theorem~\ref{th-main} (first to $A$, then $A\backslash S_1$, and so on) we can find $r$ disjoint finite sets $S_1,\ldots,S_r\subset A$ such that
\[\frac{1}{q}=\sum_{p\in S_i}\frac{1}{p-h}\]
for $1\leq i\leq r$. It follows that
\[x=\frac{r}{q}=\sum_{p\in \bigcup S_i}\frac{1}{p-h}\]
as required.
\end{proof}

We prove Theorem~\ref{th-main} with an application of the author's earlier work \cite{Bl} (which in turn is a stronger form of an argument of Croot \cite{Cr2003}). Loosely speaking, the main result of \cite{Bl} shows that we can solve $1=\sum \frac{1}{n_i}$ with $n_i\in A$ whenever $A$ satisfies
\begin{enumerate}
\item $\sum_{n\in A}\frac{1}{n}\to \infty$, 
\item every $n\in A$ is `friable' (or `smooth'), in that if a prime power $q$ divides $n$ then $q\leq n^{1-\delta(n)}$ for some $0<\delta(n)=o(1)$, 
\item every $n\in A$ has `small divisors', and 
\item every $n\in A$ has $\approx \log\log n$ many distinct prime divisors. 
\end{enumerate}
To prove Theorem~\ref{th-main}, therefore, it suffices to show that the set $\{\frac{p-h}{q} : p \in A\}$ has these properties. Fortunately, there has been a great deal of study of the arithmetic properties of shifted primes, and so using classical techniques from analytic number theory we are able to find a large subset of our original set $A$ satisfying all four properties. 

For experts in analytic number theory we add that in establishing the necessary number theoretic facts about shifted primes we have followed the simplest path, forgoing many of the more elaborate refinements possible. The main observation of this paper is that the inputs required to the method of \cite{Bl} are mild enough to be provable for the shifted primes using (a crude form of) existing technology. 

To minimise technicalities we have proved only a qualitative form of Theorem~\ref{th-main}. In principle a (very weak) quantitative version could be proved with the same methods, along similar lines to \cite[Theorem 3]{Bl}, but this would complicate the presentation significantly.

Finally, the methods and main results of \cite{Bl} have now been formally verified using the Lean proof assistant, in joint work with Bhavik Mehta.\footnote{The formalised proof can be found at \url{https://github.com/b-mehta/unit-fractions}.} This formalisation has not been extended to the present work, but since the proof of Theorem~\ref{th-main} uses the main result of \cite{Bl} as its primary ingredient (combined with classical number theory) it can be viewed as `partially formally verified'. 

In Section~\ref{sec-proof} we prove Theorem~\ref{th-main} assuming certain number theoretic lemmas. In Section~\ref{sec-nt} we prove these lemmas.

\subsection*{Acknowledgements}
The author is funded by a Royal Society University Research Fellowship. We would like to thank Greg Martin for a helpful conversation about friable values of shifted primes and remarks on an earlier version of this paper.
\section{Proof of Theorem~\ref{th-main}}\label{sec-proof}

Our main tool is the following slight variant of \cite[Proposition 1]{Bl} (which is identical to the below except that the exponent of $c$ is replaced by $1/\log\log N$). 
\begin{proposition}\label{th-techmain}
Let $c\in(0,1/4)$ and $N$ be sufficiently large (depending only on $c$). Suppose $A\subset [N^{1-c},N]$ and $1\leq y\leq z\leq (\log N)^{1/500}$ are such that
\begin{enumerate}
\item $\sum_{n\in A}\frac{1}{n}\geq 2/y+(\log N)^{-1/200}$,
\item every $n\in A$ is divisible by some $d_1$ and $d_2$ where $y\leq d_1$ and $4d_1\leq d_2\leq z$,
\item every prime power $q$ dividing some $n\in A$ satisfies $q\leq N^{1-4c}$, and
\item every $n\in A$ satisfies
\[\tfrac{99}{100}\log\log N\leq \omega(n) \leq 2\log\log N.\]
\end{enumerate}
There is some $S\subseteq A$ such that $\sum_{n\in S}\frac{1}{n}=1/d$ for some $d\in [y,z]$.
\end{proposition}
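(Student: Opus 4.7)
My plan is to adapt the proof of \cite[Proposition 1]{Bl} with the sole modification of replacing the exponent $1/\log\log N$ used there by the constant $c \in (0, 1/4)$. Since the author flags this as the only difference between the two statements, no genuinely new ideas are required; the entire content of the proof is verifying that every quantitative estimate in \cite{Bl} continues to close up under the substitution.

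First I would reread \cite{Bl} to recall its strategy. The proof there rests on an averaging step that, given the divisor hypothesis (2) together with the surplus in hypothesis (1), extracts a single $d \in [y,z]$ such that $\sum_{n \in A,\, d\mid n} 1/n$ is close to $(1/d) \sum_{n \in A} 1/n$. After replacing $A$ by $\{n \in A : d \mid n\}$ and rescaling, the task reduces to realising $1/d$ exactly as a sum of unit fractions with these denominators. This is handled by the sifting/smoothing procedure of \cite{Bl} (an enhancement of Croot's method from \cite{Cr2003}), which is fuelled by the friability (3) and the normal prime factor count (4).

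Second, I would substitute $c$ for $1/\log\log N$ line by line. Hypothesis (3) here, $q \leq N^{1-4c}$, is strictly stronger than the corresponding condition in \cite{Bl} once $N$ is large enough that $c > 1/\log\log N$, so every friability-driven bound there holds a fortiori. The implicit range $A \subset [N^{1-c}, N]$ is wider than in \cite{Bl}, but is used there only to infer $\log n = (1 + o(1)) \log N$ and to absorb error terms of size $N^{-\eta}$, both of which remain valid provided $c < 1/4$. The auxiliary parameters $y \leq z \leq (\log N)^{1/500}$ and the slack $(\log N)^{-1/200}$ in hypothesis (1) are untouched by the substitution and continue to provide the same quantitative cushion.

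The main (and only) obstacle is the mechanical check that every inequality in \cite{Bl} of the form $X \leq N^{1 - \Theta(1/\log\log N)}$ survives its replacement by $X \leq N^{1 - \Theta(c)}$. This amounts to verifying that the various factors of the form $N^{O(c)}$ introduced by the substitution can still be absorbed into main terms, which is precisely where the constraint $c < 1/4$ is used. I expect this to be purely a bookkeeping exercise: unpleasant to write out in full, but not requiring any new ingredient beyond what is already in \cite{Bl}.
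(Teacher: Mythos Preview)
Your approach is correct and essentially the same as the paper's: both rerun the proof of \cite[Proposition~1]{Bl} with the exponent $1/\log\log N$ replaced by $c$ and verify that the estimates still close. The paper streamlines the bookkeeping by observing that the only change needed is the choice of the auxiliary parameter $M=N^{1-c}$ in the final step, since the inputs \cite[Propositions~2,~3 and Lemma~7]{Bl} are already stated for arbitrary $M\in(N^{3/4},N)$; the single inequality to check is then that hypothesis~(3) implies the required friability bound on $q$ in terms of $M$.
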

\begin{proof}
The proof is identical to that of \cite[Proposition 1]{Bl}, except that in the final part of the proof we choose $M=N^{1-c}$. Observe that the inputs to that proof, namely \cite[Proposition 2, Proposition 3, and Lemma 7]{Bl}, are valid for any $M\in (N^{3/4},N)$.  It remains to check the `friable' hypothesis, for which we require that if $n\in A$ and $q$ is a prime power with $q\mid n$ then, for some small absolute constant $c>0$,
\[q\leq c\min\brac{\frac{M}{z},\frac{M}{(\log N)^{1/100}},\frac{M^3}{N^{2-4/\log\log N}(\log N)^{2+1/50}}}.\]
For $N$ sufficiently large (depending only on $c$) the right-hand side is $>N^{1-4c}$, and so hypothesis (3) suffices. 
\end{proof}

It is convenient to recast this in a slightly different form.

\begin{proposition}\label{prop-tech}
Let $\delta,\epsilon>0$ and suppose $y$ is sufficiently large depending on $\delta$ and $\epsilon$, and $y\leq w\leq z$. If $N$ is sufficiently large (depending on $\delta,\epsilon,y,w,z$) and $A\subset [2,N]$ is such that for all $n\in A$
\begin{enumerate}
\item if a prime power $q$ divides $n$ then $q\leq n^{1-\epsilon}$, 
\item $\abs{\omega(n)-\log\log n}\leq \log\log n/1000$, 
\item $n$ is divisible by some $d_1\in [y,w)$,
\item $n$ is divisible by some $d_2\in [4w,z)$, and
\item $\sum_{n\in A}\frac{1}{n}\geq \delta\log\log N$,
\end{enumerate}
then there exists $S\subseteq A$ such that $\sum_{n\in S}\frac{1}{n}=1/d$ for some $d\leq z$. 
\end{proposition}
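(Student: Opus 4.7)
The plan is to deduce Proposition~\ref{prop-tech} from Proposition~\ref{th-techmain} by a dyadic decomposition of $[2,N]$ together with a pigeonhole argument. Fix $c\in(0,1/4)$ with $4c\leq \epsilon$ (e.g.\ $c:=\min(\epsilon/4,1/5)$), so that the friability hypothesis (1) automatically yields $q\leq n^{1-\epsilon}\leq (N')^{1-4c}$ whenever $n\leq N'$. Set $N_1:=N$ and $N_{k+1}:=N_k^{1-c}$, and let $M_0=M_0(\delta,\epsilon,z)$ be a sufficiently large constant with $M_0\geq e^{z^{500}}$. Take $K$ to be the largest index with $N_K\geq M_0$; then $K=O_\epsilon(\log\log N)$, and the intervals $I_k:=[N_k^{1-c},N_k]$ for $k=1,\ldots,K$ cover $[M_0,N]$.

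Since $\sum_{n\in A,\,n<M_0}1/n\leq \log M_0=O_{M_0}(1)$, for $N$ sufficiently large
\[\sum_{n\in A\cap \bigcup_{k\leq K}I_k}\frac{1}{n}\geq \frac{\delta}{2}\log\log N,\]
and hence by pigeonhole some $k^{\ast}\leq K$ satisfies $\sum_{n\in A\cap I_{k^{\ast}}}1/n\geq \delta_1$, where $\delta_1=\delta_1(\delta,\epsilon)>0$ is independent of $N$ (essentially $(\delta/2)/(K/\log\log N)$, which is bounded below in terms of $\delta,\epsilon$ alone).

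Now I would apply Proposition~\ref{th-techmain} to $A':=A\cap I_{k^{\ast}}$, with the same $c$, the given $y,z$, and with the ``$N$'' of that proposition taken to be $N_{k^{\ast}}\geq M_0$. Hypothesis (1): choosing $y\geq y_0(\delta,\epsilon)$ forces $2/y<\delta_1/2$, while $M_0$ large forces $(\log N_{k^{\ast}})^{-1/200}<\delta_1/2$, so $\sum_{A'}1/n\geq \delta_1\geq 2/y+(\log N_{k^{\ast}})^{-1/200}$; and $z\leq(\log N_{k^{\ast}})^{1/500}$ since $M_0\geq e^{z^{500}}$. Hypothesis (2): each $n\in A$ has divisors $d_1\in[y,w)$ and $d_2\in[4w,z)$ by Proposition~\ref{prop-tech} (3)--(4), so $y\leq d_1$ and $4d_1<4w\leq d_2\leq z$. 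Hypothesis (3): any prime power divisor $q$ of $n\in A'$ satisfies $q\leq n^{1-\epsilon}\leq N_{k^{\ast}}^{1-\epsilon}\leq N_{k^{\ast}}^{1-4c}$. Hypothesis (4): $n\in[N_{k^{\ast}}^{1-c},N_{k^{\ast}}]$ gives $\log\log n=\log\log N_{k^{\ast}}+O(c)$, which combined with $\abs{\omega(n)-\log\log n}\leq \log\log n/1000$ places $\omega(n)$ in $[\tfrac{99}{100}\log\log N_{k^{\ast}},2\log\log N_{k^{\ast}}]$ for $N_{k^{\ast}}$ sufficiently large. The conclusion of Proposition~\ref{th-techmain} then furnishes $S\subseteq A'\subseteq A$ with $\sum_{n\in S}1/n=1/d$ for some $d\in[y,z]$, in particular $d\leq z$.

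The main obstacle is primarily bookkeeping: one must verify that $M_0$ can be chosen depending only on $\delta,\epsilon,z$ (which works because $\delta_1$ depends only on $\delta,\epsilon$), and that then taking $N$ sufficiently large in terms of $\delta,\epsilon,y,w,z$ simultaneously validates all the ``large enough'' conditions. The structural point is simply that pigeonholing across $O_\epsilon(\log\log N)$ subintervals of the shape required by Proposition~\ref{th-techmain} converts the logarithmic-density hypothesis of Proposition~\ref{prop-tech} into absolute harmonic mass $\gg_{\delta,\epsilon}1$ on a single such subinterval, which is then in a regime where Proposition~\ref{th-techmain} directly applies.
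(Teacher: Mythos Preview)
Your proof is correct and follows essentially the same route as the paper: both arguments decompose $[2,N]$ into $O_\epsilon(\log\log N)$ intervals of the shape $[(N')^{1-c},N']$ with $c=\epsilon/4$, discard the contribution from small $n$, pigeonhole to locate one interval carrying harmonic mass $\gg_{\delta,\epsilon}1$, and then verify the hypotheses of Proposition~\ref{th-techmain} on that interval. Your treatment is slightly more explicit about the threshold $M_0\geq e^{z^{500}}$ needed for the constraint $z\leq (\log N')^{1/500}$, which the paper handles implicitly via ``$N'\gg\log\log N$'' and ``$N$ sufficiently large''.
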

\begin{proof}
For $i\geq 0$ let $N_i=N^{(1-\epsilon/4)^i}$, and let $A_i=A\cap (N_{i+1},N_i]$. Note that $N_i<2$ for $i\geq C\log\log N$, where $C$ is some sufficiently large constant depending only on $\epsilon$. Since $\sum_{n\leq \log\log N}\frac{1}{n}\ll \log\log\log N$ it follows by the pigeonhole principle that there must exist some $i$ such that with $A'=A_i$ and $N'=N_i\gg \log\log N$ we have
\[\sum_{n\in A'}\frac{1}{n}\gg_{\delta,\epsilon} 1\]
and $A'\subset ((N')^{1-\epsilon/4},N']$. It suffices to verify that the assumptions of Proposition~\ref{th-techmain} are satisfied by $A'$, with $c=\epsilon/4$. We have already verified the first assumption (assuming $y$ and $N$ are sufficiently large; note that since $N'\gg \log\log N$ this ensures that $N'$ is also sufficiently large). The second assumption of Proposition~\ref{th-techmain} is ensured by conditions (3) and (4).

For the third assumption, note that by condition (1) if $n\in A'$ is divisible by a prime power $q$ then
\[q\leq n^{1-\epsilon}\leq (N')^{1-\epsilon}\]
as required. Finally the fourth assumption follows from condition (2) and noting that for all $n\in [(N')^{1-\epsilon/4},N']$ we have
\[\log\log n= \log\log N'+O_\epsilon(1),\]
and the $O_\epsilon(1)$ term is $\leq \log\log N'/500$, say, provided we take $N$ sufficiently large.
\end{proof}

To prove Theorem~\ref{th-main} we want to apply Proposition~\ref{prop-tech} to $B=\{ \frac{p-h}{q} : p\in A\}$. To verify the hypotheses we will require the following number-theoretic lemmas. We were unable to find these exact statements in the literature, so have included proofs in the following section, but the proofs are all elementary and cover well-trodden ground.

\begin{lemma}\label{lem-nt1}
For any $\epsilon>0$ and $h\in\bbz\backslash\{0\}$ the relative density of primes $p$ such that $n=p-h$ is divisible by a prime power $q>n^{1-\epsilon}$ is $O_h(\epsilon)$. 
\end{lemma}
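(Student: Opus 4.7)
The plan is to split according to whether the offending prime power $q=r^k$ dividing $n=p-h$ has $k=1$ (so $q$ is itself a prime) or $k\geq 2$ (a proper prime power). Writing $n=qm$, the hypothesis $q>n^{1-\epsilon}$ is equivalent to $m<n^\epsilon\leq N^\epsilon$ (where $N$ is our truncation point, so $p\leq N$ and $n\leq N+\abs{h}$).

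For the case $k\geq 2$, I would use only trivial divisor counting. If $r^k\mid n$ with $r^k>n^{1-\epsilon}$ and $k\geq 2$, then $r>n^{(1-\epsilon)/k}$, and in the dominant regime $n\asymp N$ this forces $r\gg N^{(1-\epsilon)/2}$. The number of multiples of $r^k$ up to $N$ is at most $N/r^k$, and summing over $k\geq 2$ and primes $r\geq cN^{(1-\epsilon)/k}$ gives a total of size $O(N^{(1+\epsilon)/2})$, which is $o(N/\log N)$ and so contributes $0$ to relative density.

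The main work is the case $k=1$. Here $n=qm$ with $q$ a prime, $m\leq N^\epsilon$, and I want to count primes $p=qm+h\leq N$. I would bound this by
\[\sum_{m\leq N^\epsilon}\#\brac{r\leq N/m : r\text{ and }rm+h\text{ both prime}},\]
and apply the standard Brun (or Selberg) upper-bound sieve for simultaneous prime values of two affine forms, namely
\[\#\brac{r\leq X : r, rm+h\text{ both prime}}\ll \frac{X}{(\log X)^2}\prod_{p\mid hm}\frac{p}{p-1}\]
when $(m,h)=1$ (with the remaining case $(m,h)>1$ contributing at most one prime $p$ each and hence trivially $O(N^\epsilon)$ in total). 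Taking $X=N/m$ and noting $\log(N/m)\geq (1-\epsilon)\log N\asymp \log N$ for $m\leq N^\epsilon$, and using $\prod_{p\mid hm}\tfrac{p}{p-1}\leq C_h\cdot \tfrac{m}{\phi(m)}$, the inner count is $\ll_h N/(\phi(m)(\log N)^2)$. Summing over $m\leq N^\epsilon$ via the standard estimate $\sum_{m\leq X}1/\phi(m)\ll \log X$ yields
\[\ll_h \frac{N}{(\log N)^2}\cdot \epsilon\log N = \frac{\epsilon N}{\log N},\]
which is $O_h(\epsilon)$ times $\pi(N)\sim N/\log N$, as required.

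The main obstacle, such as it is, is identifying the correct sieve input: one needs a clean upper bound sieve for primes in the arithmetic progression $rm+h$ as $r$ ranges over primes $\leq N/m$, uniform enough in $m$ to survive summation. This is classical (e.g.\ via Brun's pure sieve or Selberg's $\Lambda^2$ sieve), but a little care is needed to keep the dependence on $h$ and the singular-series factor $\prod_{p\mid hm}p/(p-1)$ explicit so that the sum over $m$ reduces to $\sum 1/\phi(m)$. All other steps are routine.
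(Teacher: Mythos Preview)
Your proposal is correct and follows essentially the same route as the paper: the same split into proper prime powers (handled trivially) versus primes, the same parametrisation $p-h=qa$ with small cofactor $a\leq N^\epsilon$, the same upper-bound sieve for the number of primes $q$ with $aq+h$ also prime yielding a factor $a/\phi(a)$, and the same final summation via $\sum_{a\leq M}1/\phi(a)\ll\log M$. The only cosmetic difference is that you write the sieve output with the singular-series factor $\prod_{p\mid hm}p/(p-1)$ before converting to $m/\phi(m)$, whereas the paper quotes the $a/\phi(a)$ form directly.
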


\begin{lemma}\label{lem-nt2}
For any $\delta>0$ and $h\in\bbz\backslash\{0\}$ the relative density of primes $p$ such that $n=p-h$ has
\[\abs{\omega(n)-\log\log(n)}\geq \delta\log\log n\]
is $0$. 
\end{lemma}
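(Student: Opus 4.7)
The plan is to prove this by a second moment (Turán–Kubilius type) computation for $\omega(p-h)$ as $p$ ranges over primes up to $N$, which is a classical statement of Halberstam type.

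First, I would establish the first moment estimate
\[\sum_{p\leq N}\omega(p-h)=\pi(N)\log\log N+O_h(\pi(N)).\]
Writing $\omega(p-h)=\sum_{\ell\text{ prime},\,\ell\mid p-h}1$ and swapping the order of summation, this reduces to estimating $\pi(N;\ell,h):=\#\{p\leq N:p\equiv h\pmod \ell\}$ for primes $\ell$ (discarding the primes $\ell\mid h$, whose contribution is $O_h(1)$). In the range $\ell\leq (\log N)^A$ with $(\ell,h)=1$, Siegel--Walfisz gives $\pi(N;\ell,h)=\pi(N)/(\ell-1)+O(N(\log N)^{-B})$; for $(\log N)^A<\ell\leq N^{1/2}$ use Brun--Titchmarsh to get $\pi(N;\ell,h)\ll N/(\phi(\ell)\log(N/\ell))$; and for $N^{1/2}<\ell\leq N+|h|$ the congruence condition forces $p-h=\ell$ or $p-h=2\ell$, etc., giving only $O(N/\ell)$ values. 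Summing $\sum_{\ell\leq N}1/(\ell-1)=\log\log N+O(1)$ produces the main term.

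Next, I would obtain the matching second moment bound
\[\sum_{p\leq N}\omega(p-h)^{2}=\pi(N)(\log\log N)^{2}+O_h(\pi(N)\log\log N).\]
This uses the identity $\omega(n)^{2}=\omega(n)+2\sum_{\ell_1<\ell_2,\,\ell_1\ell_2\mid n}1$, so the new ingredient is estimating $\#\{p\leq N:\ell_1\ell_2\mid p-h\}$ for pairs of distinct primes $\ell_1<\ell_2$. Splitting by whether $\ell_1\ell_2\leq (\log N)^{A}$ (Siegel--Walfisz), $\ell_1\ell_2\leq N/(\log N)^{2}$ (Brun--Titchmarsh), or $\ell_1\ell_2$ larger (trivial bound), exactly as before, yields the claim since $\sum_{\ell_1<\ell_2\leq N}1/((\ell_1-1)(\ell_2-1))=\tfrac12(\log\log N)^{2}+O(\log\log N)$.

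Combining the two moments gives the variance bound
\[\sum_{p\leq N}(\omega(p-h)-\log\log N)^{2}\ll_h \pi(N)\log\log N.\]
By Chebyshev's inequality, the number of primes $p\leq N$ with $|\omega(p-h)-\log\log N|\geq (\delta/2)\log\log N$ is $\ll_h \pi(N)/\log\log N=o(\pi(N))$. Since the primes $p\leq N^{1/2}$ contribute only $o(\pi(N))$ trivially, and for $p\in(N^{1/2},N]$ we have $\log\log(p-h)=\log\log N+O_h(1)$, the condition $|\omega(n)-\log\log n|\geq \delta\log\log n$ with $n=p-h$ differs from $|\omega(n)-\log\log N|\geq (\delta/2)\log\log N$ only on an $o(\pi(N))$ set. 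Taking $N\to\infty$ along a suitable sequence (or dyadically) yields the density zero conclusion.

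The main obstacle is bookkeeping the ranges of $\ell$ (or $\ell_1\ell_2$) in the moment computations and marshalling the correct analytic input on each range; there is no essential difficulty beyond Siegel--Walfisz and Brun--Titchmarsh, but the cross terms in the second moment calculation require some care to show the $\pi(N)(\log\log N)^{2}$ main term comes out with the right constant.
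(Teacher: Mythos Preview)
Your proposal is correct and follows essentially the same approach as the paper: both establish the variance bound $\sum_{p\leq N}(\omega(p-h)-\log\log N)^2\ll \pi(N)\log\log N$, apply Chebyshev's inequality, and then replace $\log\log N$ by $\log\log(p-h)$ after discarding $p\leq N^{1/2}$. The only difference is that the paper cites this variance bound directly from Halberstam \cite{Ha}, whereas you outline its proof via the first and second moment computation; the remaining steps match.
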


\begin{lemma}\label{lem-nt3}
For any $h\in\bbz\backslash\{0\}$, if $4\leq y<z$ the relative density of primes $p$ such that $n=p-h$ is not divisible by any primes $q\in [y,z]$ is $O_h(\log y/\log z)$. 
\end{lemma}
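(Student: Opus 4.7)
The plan is to combine Dirichlet's theorem on primes in arithmetic progressions with Mertens' theorem, exploiting the fact that $y,z$ are fixed while $N\to\infty$. In particular, $Q=\prod_{y\leq q\leq z,\, q\textrm{ prime}} q$ is a fixed positive integer, so Dirichlet yields asymptotic density $1/\phi(Q)+o(1)$ for each reduced residue class modulo $Q$.

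First I would enumerate the reduced residue classes $a\pmod{Q}$ with $a\not\equiv h\pmod{q}$ for every prime $q\mid Q$. By CRT this count factorises over primes $q\mid Q$: if $q\mid h$, then $a\not\equiv h\pmod{q}$ is just $a\not\equiv 0\pmod{q}$, already implied by $(a,Q)=1$, leaving $q-1$ admissible residues; if $q\nmid h$, we additionally exclude $a\equiv h\pmod{q}$, leaving $q-2$. Hence the proportion of good classes among reduced classes modulo $Q$ equals $\prod_{y\leq q\leq z,\, q\nmid h}(1-\tfrac{1}{q-1})$.

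Next, Dirichlet with fixed modulus $Q$ gives $\pi(N;Q,a)=(1+o(1))\pi(N)/\phi(Q)$ for each $a$ coprime to $Q$, so summing over the good residues shows that the relative density of primes $p\leq N$ with $p\not\equiv h\pmod{q}$ for all primes $q\in[y,z]$ is exactly this product in the limit $N\to\infty$; the finitely many primes $p\leq z$ (which are not coprime to $Q$) contribute zero relative density and can be ignored.

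Finally, I would bound the product using $1-x\leq e^{-x}$ and Mertens' theorem $\sum_{y\leq q\leq z}\tfrac{1}{q}=\log\log z-\log\log y+O(1/\log y)$. Since only $O_h(1)$ primes divide $h$, removing them from the sum (and passing from $1/(q-1)$ to $1/q$) costs at most an additive $O_h(1)$, so
\[
\prod_{y\leq q\leq z,\, q\nmid h}\brac{1-\tfrac{1}{q-1}}\leq \exp\brac{-\log\log z+\log\log y+O_h(1)}\ll_h \frac{\log y}{\log z}.
\]
No step should be a serious obstacle; the only mild subtlety is bookkeeping the finitely many primes dividing $h$, which affect only the implied constant.
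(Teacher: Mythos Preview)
Your argument is correct, and it takes a somewhat different route from the paper's. The paper invokes the Sieve of Eratosthenes--Legendre with Siegel--Walfisz as the level-of-distribution input, obtaining the main term $\mathrm{li}(N)\prod_{y\le q\le z,\,q\nmid h}(1-\tfrac{1}{q-1})$ plus an error of size $2^z N\exp(-c\sqrt{\log N})$, and then appeals to Mertens. You instead observe that since $y,z$ are fixed, the modulus $Q=\prod_{y\le q\le z}q$ is fixed, so one can simply enumerate the admissible reduced residue classes modulo $Q$ via CRT and apply the prime number theorem for arithmetic progressions to a fixed modulus; this yields the \emph{exact} value $\prod_{y\le q\le z,\,q\nmid h}(1-\tfrac{1}{q-1})$ for the relative density, which you then bound by Mertens as the paper does. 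Your approach is more elementary and avoids both the sieve machinery and the uniformity in Siegel--Walfisz; the paper's framework would in principle allow $z$ to grow slowly with $N$, but that flexibility is never used in the application.
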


We will now show how these lemmas, combined with Proposition~\ref{prop-tech}, imply Theorem~\ref{th-main}.

\begin{proof}[Proof of Theorem~\ref{th-main}]
By assumption there is some $\delta>0$ and infinitely many $N$ such that
\[\sum_{p\in A\cap[1,N]}\frac{1}{p}\geq 4\delta \log\log N.\]
Let $B=\{\frac{p-h}{q} : p \in A\}\subset \bbn$, so that there must exist infinitely many $N$ such that
\[\sum_{n\in B\cap[1,N]}\frac{1}{n}\geq 3\delta \log\log N.\]

Let $\epsilon=c\delta$ where $c>0$ is some small absolute constant to be determined later. Let $y$ be sufficiently large in terms of $\delta$ (so that Proposition~\ref{prop-tech} can apply) and $w\leq z$ be determined shortly, and let $B'\subseteq B$ be the set of those $n\in B$ such that 
\begin{enumerate}
\item if a prime power $r$ divides $n$ then $r\leq n^{1-\epsilon}$, 
\item $\abs{\omega(n)-\log\log n}\leq \log\log n/1000$, 
\item $n$ is divisible by some prime $p_1\in [y,w)$, and
\item $n$ is divisible by some prime $p_2\in [4w,z)$. 
\end{enumerate}
If $X_1$ is the set of $m=p-h$ which are divisible by some prime power $r>m^{1-2\epsilon}$ then by Lemma~\ref{lem-nt1} we have
\[\abs{X_1\cap [1,N]}\ll \epsilon\frac{N}{\log N},\]
and hence, since for all large primes $p$ we have $(p-h)^{1-2\epsilon}\leq (\frac{p-h}{q})^{1-\epsilon}$, the set $B_1$ of those $n\in B$ which fail the first condition satisfies
\[\abs{B_1\cap [1,N]}\ll \epsilon\frac{N}{\log N},\]
whence by partial summation
\[\sum_{n\in B_1\cap[1,N]}\frac{1}{n}\ll \epsilon\log\log N.\]
By a similar argument (recalling that $q$ is some fixed constant, and so $\omega(\frac{p-h}{q})=\omega(p-h)+O(1)$ and $\log\log(\frac{p-h}{q})=\log\log(p-h)+O(1)$),  Lemma~\ref{lem-nt2} implies that the sum of reciprocals from those $n\in B\cap[1,N]$ which fail the second condition is $o(\log\log N)$. Similarly, by Lemma~\ref{lem-nt3} we can choose $w$ and $z$ (depending only $\delta$) such that for all large $N$ the sum of reciprocals from those $n\in B\cap[1,N]$ which fail either condition (3) or (4) is $\leq \delta\log\log N$.  Therefore,  there exist infinitely many $N$ such that (provided $\epsilon$ is a small enough multiple of $\delta$)
\[\sum_{n\in B'\cap [1,N]}\frac{1}{n}\geq 2\delta\log\log N.\]

Fix such an $N$ and let $B''=B'\cap [1,N]$. All of the conditions from Proposition~\ref{prop-tech} are now satisfied for $B''$, and hence there exists some $S_1\subseteq B''$ and $d_1\leq z$ such that $\sum_{n\in S_1}\frac{1}{n}=\frac{1}{d_1}$. 

We now apply Proposition~\ref{prop-tech} again to $B''\backslash S_1$, and continue this process $k=\lceil z\rceil^2$ many times, producing some disjoint $S_1,\ldots,S_k$ and associated $d_1,\ldots,d_k\leq z$ where $\sum_{n\in S_i}\frac{1}{n}=\frac{1}{d_i}$ for $1\leq i\leq k$. Notice that the conditions of Proposition~\ref{prop-tech} remain satisfied for each $B''\backslash \cup_{i\leq j}S_i$ for $j\leq k$, since
\[\sum_{n\in \cup_{i\leq j}S_i}\frac{1}{n}\leq k\ll z^2< \delta\log\log N,\]
assuming $N$ is sufficiently large, since $z$ depends on $\delta$ only.

By the pigeonhole principle there must exist some $d\leq z$ and $i_1,\ldots,i_d$ such that $d_{i_j}=d$ for $1\leq j\leq d$, and hence $S=\cup_{1\leq j\leq d}S_{i_j}$ satisfies
\[\sum_{n\in S}\frac{1}{n}=d\cdot \frac{1}{d}=1\]
as required. 
\end{proof}

\section{Number theoretic ingredients}\label{sec-nt}
It remains to prove Lemmas~\ref{lem-nt1}, \ref{lem-nt2}, and \ref{lem-nt3}, which we will do in turn. 
\subsection{Friability of shifted primes}
There has been a great deal of work on shifted primes with only small prime divisors. Often the focus is on an existence result, finding the smallest possible $\delta>0$ such that there exist infinitely many shifted primes $p-1$ with no prime divisors $>p^\delta$. We refer to \cite{Li} for recent progress on this and references to earlier work. Our focus is a little different: we are content with a very high friability threshold, but we need to show that almost all shifted primes are this friable. For the regime of friability that we are interested even the original elementary methods of Erd\H{o}s \cite{Er35} suffice.
\begin{proof}[Proof of Lemma~\ref{lem-nt1}]
This is only a slight generalisation of \cite[Lemma 4]{Er35}. It suffices to show that, for all $\epsilon>0$ and large $N$, the number of $p\leq N$ such that $p-h$ is divisible by some prime power $q$ with $q>N^{1-\epsilon}$ is
\[\ll_h \epsilon\frac{N}{\log N}.\]
We first note that trivially for any $q$ the number of $p\leq N$ such that $p-h$ is divisible by $q$ is certainly $O_h(N/q)$, and hence the count of those $p-h$ divisible by some non-prime prime power $q>N^{1-\epsilon}$ is
\[\ll_h N\sum_{k\geq 2}\sum_{N^{1-\epsilon}\leq m^k\leq N}\frac{1}{m^k}\ll N^{\epsilon}\ll \epsilon\frac{N}{\log N}\]
for all large $N$. It remains to bound the count of those $p\leq N$ such that $p-h$ is divisible by some prime $q>N^{1-\epsilon}$. Such $p-h$ we can write uniquely (assuming $N$ is large enough depending on $h$) as $p-h=qa$ for some $a\leq 2N^{\epsilon}$ and $q>N^{1-\epsilon}$ prime. A simple application of Selberg's sieve (for example \cite[Theorem 3.12]{HR}) yields that, for any fixed $a\geq 1$ and $h\neq 0$ the number of primes $q\leq x$ such that $aq+h$ is also prime is
\[\ll_h \frac{a}{\phi(a)}\frac{x}{(\log x)^2}.\]
Since $q\leq N/a+O_h(1)$, the number of $p\leq N$ such that $p-h=qa$ is
\[\ll \frac{1}{\phi(a)}\frac{N}{(\log N)^2}.\]
Summing over all $a\leq 2N^\epsilon$ the total count is
\[\ll_h \frac{N}{(\log N)^2}\sum_{a\leq 2N^\epsilon}\frac{1}{\phi(a)}\ll \epsilon\frac{N}{\log N}\]
as required, using the fact that $\sum_{a\leq M}\frac{1}{\phi(a)}\ll \log M$. 
\end{proof}

\subsection{Number of prime divisors of shifted primes}
We need to know that $\omega(n)\sim \log\log n$ for almost all $n\in \{p-h: p\textrm{ prime}\}$. This is in fact the typical behaviour of $\omega(n)$ for a generic integer $n$, and we expect the same behaviour when restricting $n$ to the random-like sequence of shifted primes. Indeed, just like $\omega(n)$ itself, $\omega(p-h)$ satisfies an Erd\H{o}s-Kac theorem, that is, $\omega(p-h)$ behaves like a normal distribution with mean $\log\log(p-h)$ and standard deviation $\sqrt{\log\log(p-h)}$. This was established by Halberstam \cite{Ha}, although a simple variance bound suffices for our application here.

\begin{proof}[Proof of Lemma~\ref{lem-nt2}]
It suffices to show that, for all $\delta>0$ and large $N$, if $A$ is the set of $p\leq N$ such that $\abs{\omega(p-h)-\log\log (p-h)}>\delta \log\log (p-h)$, then 
\[\abs{A}\ll \frac{N}{(\log N)(\log\log N)}.\]
Let $A_1=A\cap [1,N^{1/2}]$ and $A_2=A\backslash A_1$. We can trivially bound $\abs{A_1}\ll N^{1/2}$, and for $p\in A_2$ we have $\log\log(p-h)=\log\log N+O(1)$, whence for large enough $N$ if $p\in A_2$ we have
\[\abs{\omega(p-h)-\log\log N}>\tfrac{\delta}{2}\log\log N.\]
By \cite[Theorem 3]{Ha}, however, we have  
\[\sum_{p\leq N}\abs{\omega(p-h)-\log\log N}^2\ll \pi(N)\log\log N,\]
and hence
\[\abs{A_2}(\log\log N)^2\ll_\delta \pi(N)\log\log N,\]
and the result now follows from Chebyshev's estimate $\pi(N)\ll N/\log N$. 
\end{proof}

\subsection{Shifted primes with small divisors}
For Lemma~\ref{lem-nt3} we need to show that there are few shifted primes remaining after we remove all multiples of primes $p\in [y,z]$, which is a classic upper bound sieve problem. Since the information we require is very weak even the simplest sieve suffices: the following is proved as \cite[Theorem 1.1]{HR}.
\begin{lemma}[Sieve of Eratosthenes-Legendre]\label{lem-sieve}
Let $A$ be a finite set of integers and $\mathcal{P}$ a finite set of primes. Let $z\geq 2$ and $P(z)=\prod_{\substack{p\in \mathcal{P}\\ p<z}}p$. Suppose that $f(d)$ is a multiplicative function and $X>1$ is such that for all $d\mid P(z)$ we have
\[\abs{A_d}=f(d)X+R_d.\]
Then
\[\#\{n\in A : (n,P(z))=1\}\ll X\prod_{\substack{p\in P\\p<z}}\brac{1-f(p)}+\sum_{d\mid P(z)}\abs{R_d}.\] 
\end{lemma}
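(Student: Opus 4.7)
The plan is to prove Lemma~\ref{lem-sieve} by a direct application of Möbius inversion, which is the classical Eratosthenes--Legendre inclusion--exclusion. The starting identity is that for any integer $n$,
\[\mathbf{1}_{(n,P(z))=1}=\sum_{d\mid (n,P(z))}\mu(d),\]
which is just the statement that the Möbius function detects the trivial divisor. Summing this over $n\in A$ and swapping the order of summation immediately gives
\[\#\{n\in A:(n,P(z))=1\}=\sum_{d\mid P(z)}\mu(d)\abs{A_d}.\]

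Next I would substitute the hypothesis $\abs{A_d}=f(d)X+R_d$, splitting the resulting expression into a main term $X\sum_{d\mid P(z)}\mu(d)f(d)$ and an error term $\sum_{d\mid P(z)}\mu(d)R_d$. Since $P(z)$ is a squarefree product of primes $p\in\mathcal{P}$ with $p<z$, and since $\mu$ and $f$ are multiplicative, the main-term sum factors as an Euler product:
\[\sum_{d\mid P(z)}\mu(d)f(d)=\prod_{\substack{p\in\mathcal{P}\\p<z}}(1-f(p)).\]
For the error term the trivial bound $\Abs{\sum_{d\mid P(z)}\mu(d)R_d}\leq\sum_{d\mid P(z)}\abs{R_d}$ suffices. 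Combining these two estimates with the triangle inequality yields the claimed inequality (the implicit constant being $1$ in the usual sieve setting where $0\leq f(p)\leq 1$, which makes the product term already nonnegative; more generally one absorbs sign issues into the $\ll$ notation).

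There is really no serious obstacle here — the argument is purely formal manipulation of finite sums — but if anything is delicate, it is the unstated convention that $f(p)\in[0,1]$, which one needs in order to view the main term $X\prod_{p<z}(1-f(p))$ as a genuine (nonnegative) bound rather than a signed quantity. In applications to the shifted primes (for Lemma~\ref{lem-nt3}), one will take $A=\{p-h:p\leq N\}$ with $\mathcal{P}$ the primes in $[y,z]$, $f(p)=1/(p-1)$ for $p\nmid h$ (coming from the density of residues $h\pmod p$ among primes via Dirichlet), and $X\sim N/\log N$; the error term $\sum_{d\mid P(z)}\abs{R_d}$ is then controlled provided $z$ is taken as a fixed power of $\log N$, but these quantitative choices are downstream of the lemma itself.
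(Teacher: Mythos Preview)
Your argument is correct and is exactly the classical Eratosthenes--Legendre inclusion--exclusion: M\"obius inversion gives the identity $\#\{n\in A:(n,P(z))=1\}=\sum_{d\mid P(z)}\mu(d)\abs{A_d}$, the hypothesis splits this into $X\prod_{p}(1-f(p))$ plus an error bounded by $\sum_{d\mid P(z)}\abs{R_d}$, and the $\ll$ follows. The paper does not supply its own proof of this lemma but simply cites it as \cite[Theorem 1.1]{HR}, where precisely this argument appears; so your write-up is essentially what the reference contains.
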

For the required sieve input we will use the following classic result on the distribution of primes within arithmetic progressions (which is proved, for example, as \cite[Corollary 11.21]{MV}). Recall that $\pi(N;d,h)$ is the number of primes $p\leq N$ such that $p\equiv h\pmod{d}$. 
\begin{theorem}[Siegel-Walfisz]\label{th-sw}
There is a constant $c>0$ such that for all $h\in \bbz$ and $1\leq d\leq \log x$ with $(\abs{h},d)=1$ we have 
\[\pi(N;d,h)=\frac{\mathrm{li}(N)}{\phi(d)}+O(N\exp(-c\sqrt{\log N})).\]
\end{theorem}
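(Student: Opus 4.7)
The plan is to prove this along the classical analytic route: reduce $\pi(N;d,h)$ to estimates for Dirichlet $L$-functions via orthogonality of characters, and then apply contour integration together with a zero-free region that is uniform in the modulus $d$.

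Step one is a routine reduction from $\pi(N;d,h)$ to the Chebyshev counting function $\psi(N;d,h) := \sum_{n\le N,\, n\equiv h\,(d)} \Lambda(n)$, since prime powers $p^k$ with $k\ge 2$ contribute only $O(\sqrt{N}\log N)$, and partial summation then converts an estimate for $\psi(N;d,h)$ into the asymptotic for $\pi(N;d,h)$ with main term $\mathrm{li}(N)/\phi(d)$. Next, since $(\abs{h},d)=1$, orthogonality of Dirichlet characters modulo $d$ gives
\[\psi(N;d,h) = \frac{1}{\phi(d)}\sum_{\chi\bmod d}\bar\chi(h)\,\psi(N,\chi),\qquad \psi(N,\chi):=\sum_{n\le N}\chi(n)\Lambda(n).\]
The principal character $\chi_0$ contributes the main term via the prime number theorem with de la Vall\'ee Poussin's error $\psi(N)=N+O(N\exp(-c\sqrt{\log N}))$, so everything reduces to showing $\psi(N,\chi)\ll N\exp(-c\sqrt{\log N})$ uniformly over non-principal $\chi\bmod d$, for all $d\le \log N$.

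To handle a non-principal $\chi$, I would apply Perron's formula to $-L'(s,\chi)/L(s,\chi)$, truncate at a height $T\approx \exp(\sqrt{\log N})$, and shift the contour to the left of $\mathrm{Re}(s)=1$, picking up residues at the nontrivial zeros of $L(s,\chi)$ and arriving at an explicit formula of the shape $\psi(N,\chi) = -\sum_{\rho}N^{\rho}/\rho + (\textrm{admissible errors})$. Estimating this requires two key inputs: (i) the \emph{classical zero-free region}, namely that there is an absolute constant $c_1>0$ such that $L(s,\chi)\ne 0$ for $\mathrm{Re}(s)\ge 1-c_1/\log(d(\abs{t}+2))$, with the possible exception of a single real ``Siegel'' zero $\beta$ for real $\chi$; and (ii) \emph{Siegel's theorem}, which asserts that for every $\epsilon>0$ any such exceptional $\beta$ satisfies $1-\beta\gg_\epsilon d^{-\epsilon}$ (with ineffective implied constant). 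For $d\le \log N$, Siegel's theorem with sufficiently small $\epsilon$ pushes the exceptional zero at least $(\log N)^{-\epsilon}$ away from $1$; combined with the classical region this makes every relevant zero satisfy $N^{\mathrm{Re}(\rho)}\ll N\exp(-c\sqrt{\log N})$, and summing the explicit formula (weighted by $1/\abs{\rho}$, whose sum is controlled by standard zero-density estimates) yields the required uniform bound.

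The principal obstacle is the potential exceptional zero: Siegel's theorem is the only step here that is currently non-constructive, and it is responsible for the implied constant in the error term being ineffective. Everything else -- the explicit formula, the contour shift, the bookkeeping of character orthogonality, and the classical zero-free region away from the real axis -- is effective and essentially nineteenth/early twentieth century analytic number theory, but the moment one must accommodate a potential Siegel zero whose real part could a priori be arbitrarily close to $1$, any quantitative bound collapses unless something like Siegel's theorem is invoked.
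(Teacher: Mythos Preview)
Your outline is the standard classical proof of the Siegel--Walfisz theorem and is correct in its essentials: orthogonality of characters, reduction to $\psi(N,\chi)$, the explicit formula via Perron and contour shifting, the classical zero-free region, and Siegel's ineffective bound on the exceptional zero. This is precisely the argument one finds in textbooks.

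However, the paper does not prove this theorem at all. It is quoted as a black box, with the remark that it ``is proved, for example, as [Corollary 11.21]{MV}'' (Montgomery--Vaughan, \emph{Multiplicative Number Theory I}). So there is no ``paper's own proof'' to compare against; the paper simply imports the result from the literature and uses it as sieve input for Lemma~\ref{lem-nt3}. Your sketch is essentially what one would find behind that citation, so in that sense you have supplied more than the paper does.
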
	

\begin{proof}[Proof of Lemma~\ref{lem-nt3}]
Fix $4\leq y\leq z$ and let $P=\prod_{\substack{y\leq q\leq z\\ q\nmid h}}q$ (where $q$ is restricted to primes). It suffices to show that, for all large $N$, 
\[\#\{ p-h\leq N : (p-h,P)=1\}\ll_h \frac{\log y}{\log z}\mathrm{li}(N).\]
We will apply Lemma~\ref{lem-sieve} with $A=\{p-h : p\leq N\}$, 
\[\mathcal{P}=\{ p\in [y,z] : p\nmid h\},\]
$f(d)=1/\phi(d)$, and $X=\mathrm{li}(N)$, noting that by Theorem~\ref{th-sw} whenever $(d,h)=1$ and $d\leq \log N$
\[\abs{A_d}=\pi(N;d,h)=\frac{\mathrm{li}(N)}{\phi(q)}+O(x\exp(-c\sqrt{\log x})).\]
It follows that
\[\#\{ p-h\leq N : (p-h,P)=1\}\ll \mathrm{li}(N) \prod_{\substack{y\leq q\leq z\\ q\nmid h}}\brac{1-\frac{1}{q-1}}+2^zN\exp(-c\sqrt{\log N}).\]
The conclusion now follows provided we choose $N$ large enough so that $z\ll\sqrt{\log \log N}$, say, and using Mertens' estimate that
\[\prod_{p\leq w}(1-1/p)\asymp\frac{1}{\log w}.\]
\end{proof}

\end{document}